\newcounter{oftheorem}[subsection]
\newenvironment{mytheorem}[1]%
{\begin{trivlist}
     
     \refstepcounter{oftheorem}
     \item[\hspace{\labelsep}\bf\thesection.\arabic{oftheorem} #1]}%
{\end{trivlist}}
\newenvironment{proposition}{\begin{mytheorem}{Proposition}\it}{\end{mytheorem}}
\newenvironment{corollary}{\begin{mytheorem}{Corollary}\it}{\end{mytheorem}}
\newenvironment{lemma}{\begin{mytheorem}{Lemma}}{\end{mytheorem}}
\begin{document}

\title{\textbf{Stationary Solitons in discrete NLS with
non-nearest neighbour interactions}}

\author{Vassilis M.~ Rothos$^\S$, Stavros Anastassiou$^\dag$ and Katerina G. Hadjifotinou$^\ddag$}

\affil{\small $^\S$ Department of Mechanical Engineering, Faculty of Engineering, Aristotle University of
Thessaloniki, Thessaloniki 54124, Greece}
\affil{\small $^\dag$ Department of Mathematics, University of West Macedonia, Kastoria 52100, Greece}
\affil{\small $^\ddag$ Department of Mathematics,Faculty of Science, Aristotle University of
Thessaloniki, Thessaloniki 54124, Greece}

\maketitle

\begin{abstract}
The aim of this paper is to provide a construction of stationary discrete solitons
in an extended one-dimensional Discrete NLS  model with non-nearest neighbour
interactions. These models, models of the type with long-range interactions were studied in various other contexts. In
particular, it was shown that, if the interaction strength decays sufficiently slowly as a function of distance, it gives
rise to bistability of solitons, which may find applications in their controllable switching. Dynamical lattices
with long-range interactions also serve as models for energy and charge transport in biological molecules. Using a dynamical systems method we are able to construct, with great accuracy, stationary discrete solitons for our model, for a large region of the parameter space.
\end{abstract}
\textbf{Keywords:} discrete NLS equation, solitons, invariant manifolds, parametrization method\\
\textbf{MSC2010:} 35Q53, 35B08, 37G20, 35A35

\section{Introduction}

Among the most prominent differential equations in applied mathematics are nonlinear
Schr\"{o}dinger (NLS) equations, a class of partial differential equations which have been employed as
models in diverse branches of physics. The nonlinear Schr\"{o}dinger (NLS) equation describes a very large variety of physical systems since it is the lowest order nonlinear (cubic) partial differential equation that describes the propagation of modulated waves. For example, two of the most salient applications of NLS equations, that will be the main theme of this session, stem from the realm on nonlinear optics and Bose-Einstein condensates (BECs). In optics, the NLS stems from the nonlinear (Kerr) response of the refractive index of some nonlinear media. On the other hand, for BECs, the mean-field description of a condensed cloud of atoms is well approximate, at low enough temperatures, by the NLS with an external potential. 

It is well known that the solitons described by the regular NLS equations with power-law nonlinearities may be unstable if the number of space dimensions or (and) the power of nonlinearity is sufficiently high \cite{4th_1}, \cite{4th_2}. In some cases the instabilities result in the collapse (blowup) which occurs at finite time (in two space dimensions this is the so-called self-focusing). In many papers, the main attention in the studies of stabilizations of instabilities has been paid to the saturation of nonlinearity. On the other hand, the effects of higher-order dispersion may also play a significant role \cite{4th_3}. In this work, we continue the investigation of the stabilizing role of higher-order dispersive effects using a comparatively simple model described by the equation
\begin{equation}
\label{1.1}
\mathrm{i} \partial_t \Psi+\frac{1}{2} \Delta \Psi+\frac{1}{2} \gamma \Delta^2 \Psi+|\Psi|^{2 p} \Psi=0,
\end{equation}
where $p$ is an integer number, and
$$
p \geq 1, \quad \Delta=\nabla_\alpha \nabla_\alpha, \quad \alpha=1, \ldots, D, \quad D=1,2,3
$$
(summation over repeated indices is assumed; $D$ is the number of space dimensions). The term containing $\Delta^2$ describes the higher-order dispersion. Eq. \eqref{1.1} permits one to investigate the role of space dimensions, strength of  nonlinearity and higher-order dispersion effects in the soliton stabilization. The criteria for soliton stability, briefly reported earlier \cite{4th_3}, contain $D$ and $p$ not separately, but through the product $D p$; at $\gamma=0$ they turn into the conditions derived for the NLS type equations with the lowest order dispersion \cite{4th_1}, \cite{4th_2}. Fourth-order Schr\"{o}dinger equations have been introduced to take into account the role of small fourth-order dispersion terms in the propagation of intense laser beams in a bulk medium with Kerr nonlinearity. Such fourth-order Schr\"{o}dinger equations have been studied from the mathematical viewpoint in Fibich, Ilan and Papanicolaou \cite{4th_4} who describe various properties of the equation in the subcritical regime, with part of their analysis relying on very interesting numerical developments. 

During the early years, studies of intrinsic localized modes were
mostly of a mathematical nature, but the ideas of localized modes
soon spread to theoretical models of many different physical
systems, and the discrete breather concept has been recently
applied to experiments in several different physics
subdisciplines. Most nonlinear lattice systems are not integrable
even if the partial differential equation (PDE) model in the
continuum limit is. While for many years spatially continuous
nonlinear PDE's and their localized solutions have received a
great deal of attention, there has been increasing interest in
spatially discrete nonlinear systems. Namely, the dynamical
properties of nonlinear systems based on the interplay between
discreteness, nonlinearity and dispersion (or diffraction) can
find wide applications in various physical, biological and
technological problems. Examples are coupled optical fibres
(self-trapping of light)~\cite{Cris,Sergej,coupled,Lenz}, arrays
of coupled Josephson junctions~\cite{Joseph}, nonlinear charge and
excitation transport in biological macromolecules, charge
transport in organic semiconductors~\cite{semi}.

Prototype models for such nonlinear lattices take the form of
various nonlinear lattices \cite{Aubry}, a particularly important
class of solutions of which are the so called discrete breathers which
are homoclinic in space and oscillatory in time. Other questions
involve the existence and propagation of topological defects or
kinks which mathematically are heteroclinic connections between a
ground and an excited steady state. Prototype models here are
discrete version of sine-Gordon equations, also known as
Frenkel-Kontorova (FK) models, e.g.\ \cite{ACR03}. There are many
outstanding issues for such systems relating to the global
existence and dynamics of localized modes for general
nonlinearities, away from either continuum or anti-continuum
limits.

In the main part of the previous studies of the discrete NLS
models the dispersive interaction was assumed to be short-ranged
and a nearest-neighbour approximation was used. However, there exist
physical situations that definitely can not be described in the
framework of this approximation. 

The existence of localized traveling waves (sometimes called ``moving
discrete breathers'' or ``discrete solitons'') in discrete nonlinear
Schr\"{o}dinger (DNLS) lattices has shown itself to be a delicate
question of fundamental scientific interest (see e.g \cite{FK99}).  
This interest
is largely due to the experimental realization of solitons in discrete
media, such as waveguide arrays \cite{CLS03} optically induced
photorefractive crystals \cite{FCSEC03} and Bose-Einstein condensates
coupled to an optical lattice trap \cite{KRB01}. The prototypical
equation that emerges to explain the experimental observations is the DNLS model of the form:
\begin{equation}
\label{eqn:dnls}
i \dot{u}_{n}(t) = \frac{u_{n+1}(t) -
2u_{n}(t) + u_{n-1}(t)}{h^{2}}
+ F(u_{n+1}(t), u_{n}(t), u_{n-1}(t)),
\end{equation}
where the integer $n \in \mathbb{Z}$ labels a one-dimensional array of lattice
sites, with spacing  $h$. Alternatively $h$ can be thought of as
representing the inverse coupling
strength between adjacent sites.
The nonlinear term $F$ can take a number of different
forms:
\begin{itemize}
\item {DNLS equation: $F_{\rm DNLS} = |u_n|^2 u_n$,
}
\item {Abolowitz-Ladik (AL) model \cite{AL76}: $F_{\rm AL}= |u_n|^2(u_{n+1} + u_{n-1})$,}
\item {Salerno model \cite{S92}: $F_{\rm S}=2(1-\alpha) F_{\rm DNLS} + \alpha F_{\rm AL}$,}
\item {cubic-quintic DNLS: $F_{3-5}=(|u_n|^2 + \alpha |u_n|^4)u_n
$,}
\item {saturable DNLS: $F_{\rm sat}= \frac{u_n}{1+|u_n|^2}$,}
\item {generalised cubic DNLS equation:
\begin{eqnarray}
\nonumber F_{g3} & = & \alpha_1 |u_n|^2 u_n + \alpha_2 |u_n|^2 (u_{n+1} +
u_{n-1}) + \alpha_3 u_n^2 (\bar{u}_{n+1} + \bar{u}_{n-1}) \\
\nonumber & \phantom{t} & \alpha_4 (|u_{n+1}|^2 + |u_{n-1}|^2) u_n
+ \alpha_5 (\bar{u}_{n+1} u_{n-1} + u_{n+1} \bar{u}_{n-1} ) u_n + \\
\nonumber & \phantom{t} & \alpha_6 (u_{n+1}^2 + u_{n-1}^2) \bar{u}_n +
\alpha_7 u_{n+1} u_{n-1} \bar{u}_n \\
\nonumber & \phantom{t} & + \alpha_8 (|u_{n+1}|^2 u_{n+1} +
|u_{n-1}|^2 u_{n-1}) + \alpha_9 (u_{n+1}^2 \bar{u}_{n-1} + u_{n-1}^2 \bar{u}_{n+1} ) \\
& \phantom{t} & + \alpha_{10} (|u_{n+1}|^2 u_{n-1} + |u_{n-1}|^2
u_{n+1}). \label{eqn:Dimitry}
\end{eqnarray}
where $\bar{\cdot}$ is used to represent complex conjugation.}
\end{itemize}

Note that when $\alpha_1 = 2(1 - \alpha_2)$, $\alpha_2 \in \mathbb{R}$, and
$\alpha_j = 0$ for $3 \leq j \leq 10$, the nonlinear function $F_{g3}$ reduces
to the Salerno nonlinearity $F_{S}$. 
Stationary localized solutions to \eqref{eqn:dnls} of
the form $u(n,t) = e^{-i \omega t} U(n)$
abound in such models under quite general hypotheses on the function $F$,
and indeed one can pass to the continuum limit $h \to 0$, $x=nh$
and find the corresponding solutions to continuum NLS equations of the form
\begin{equation}
i \dot{u}_{n}(t) = u_{xx} + f( |u|^2) u.
\label{eqn:nls}
\end{equation}

The goal of this work is to study the existence of special solutions (stationary solutions) for extended nonlinear lattice models of the 4th order NLS-type equation \eqref{1.1}. 


We consider the DNLS-type model with the next-nearest-neighbor interaction for simplicity of our analysis (we could consider the direct discretization of \eqref{1.1}, in a future publication)

\begin{equation}
\label{mainpdeeq}
i\dot{u}_n+|u_n|^2u_n+\epsilon \Big(u_{n+1}-2u_{n}+u_{n-1}+ A\big(u_{n+2}+u_{n-2}\big)\Big)=0,
\end{equation}
where $u_n:\mathbb{R}_+\rightarrow \mathbb{C},\ n\in \mathbb{Z},\ \epsilon >0$.

We seek steady--state solutions of this equation, i.e. solutions that satisfy $\dot{u}_n(t)=0,\forall n\in \mathbb{Z}$.

We begin our study in section 2, where we set $A=0$ in equation (\ref{mainpdeeq}). A two--dimensional mapping is obtained and its qualitative behaviour is studied, which turns out to be quite simple.

In section 3, we turn our attention to the case where $A\neq 0$. In this case, by setting $\dot{u}_n(t)=0,\forall n\in \mathbb{Z}$, a four--dimensional mapping is obtained, the qualitative properties of which are of interest here. Its symmetry properties are analysed and the stability of its fixed points is determined. We then proceed to compute orbits homoclinic to the origin for this, four--dimensional, mapping. To achieve that, we employ the Parametrization Method and are able to locate, with great accuracy, homoclinic points when parameters $A,\ \epsilon$ belong to a certain region of the parameter space. We also argue that these homoclinic points are transverse, implying the existence of complicated behaviour in the phase space of our mapping. Note that, in this way, we have constructed stationary soliton solutions of equation (\ref{mainpdeeq}). Last section contains the conclusions of this work.  


\section{The 2--d mapping}
To study steady--state solutions for equation (\ref{mainpdeeq}), we let $\dot{u}_n=0$. We begin, in this section, be setting $A=0$. We get:
\[
|u_n|^2u_n+\epsilon \big(u_{n+1}-2u_{n}+u_{n-1}\big)=0\Rightarrow u_{n+1}=-u_{n-1}+2u_n-\frac{1}{\epsilon}|u_n|^2u_n.
\]
By introducing variables $x=u_{n-1},\ y=u_n$, we arrive at the following mapping of the complex plane:
\begin{equation}
f_0:\mathbb{C}^2\rightarrow \mathbb{C}^2,\ f_0(x,y)=(y,-x+2y-\frac{1}{\epsilon}|y|^2y).
\end{equation}
Observe that $f_0(x,y)\in \mathbb{R}^2$ if, and only if, $(x,y)\in \mathbb{R}^2$. We therefore restrict our attention to the case where the variables are real, and consider the following mapping, which we also denote by $f_0$:
\begin{equation}
\label{2dmap}
f_0:\mathbb{R}^2\rightarrow \mathbb{R}^2,\ f_0(x,y)=(y,-x+2y-\frac{1}{\epsilon}y^3).
\end{equation}
In this section, it is our intention to study mapping (\ref{2dmap}).

Observe that it is a generalised H\'{e}non map, i.e., a mapping of the form:
\[
(x,y)\mapsto (y,\delta x+p(y)),\]
which has been extensively studied, not only in its two--dimensional form presented above, but in its higher dimensional analogues as well (see, for example, \cite{LM,SDM,DM,GOST,GMO,GLM,Z,GG,ABB1,ABB2,A} and references therein).   

The reader can easily confirm that, $\forall (x,y)\in \mathbb{R}^2,\ |Df_0(x,y)|=1$, thus our mapping is area--preserving, while its inverse is given by:
\[
f_0^{-1}:\mathbb{R}^2\rightarrow \mathbb{R}^2,\ f_0^{-1}(x,y)=(2 x-\frac{1}{\epsilon}x^3-y,x).
\]
Moreover, mapping $f_0$ is symmetric with respect to the involution $\sigma_1(x,y)=(-x,-y)$, i.e. relation $f_0\circ \sigma_1 =\sigma_1 \circ f_0$ holds, while it is reversible with respect to the involutions $\sigma_2(x,y)=(y,x),\ \sigma_3(x,y)=(-y,-x)$, that is, 
\[
f_0\circ \sigma_2=\sigma_2\circ f_0^{-1},\ f_0\circ \sigma_3=\sigma_3\circ f_0^{-1}.
\]

Let us locate bounded orbits for mapping (\ref{2dmap}).
\begin{proposition}
Mapping (\ref{2dmap}) possesses, $\forall \epsilon \in \mathbb{R}$, a unique fixed point, located at the origin, which is a degenerate parabolic point. This fixed point is stable if, and only if, $\epsilon >0$. The non--wandering set of the mapping is contained in $\big[-2\sqrt{|\epsilon|},2\sqrt{|\epsilon|}\big]\times \big[-2\sqrt{|\epsilon}|,2\sqrt{|\epsilon|}\big]\subset \mathbb{R}^2$.
\end{proposition}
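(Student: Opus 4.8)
The plan is to settle the four assertions in turn; only the stability statement for $\epsilon>0$ requires real work.

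\emph{Fixed point and linearisation.} Solving $f_0(x,y)=(x,y)$, the first coordinate forces $x=y$, and then the second reads $x-\tfrac1\epsilon x^3=x$, i.e.\ $\tfrac1\epsilon x^3=0$, so $x=y=0$: the origin is the unique fixed point for every $\epsilon\neq0$. Differentiating, $Df_0(0,0)=\left(\begin{smallmatrix}0&1\\-1&2\end{smallmatrix}\right)$, with characteristic polynomial $(\lambda-1)^2$; since $Df_0(0,0)-I$ has rank one, the linear part is $I$ plus a non-zero nilpotent, so the origin is a parabolic fixed point of degenerate (non-semisimple) type. Crucially $Df_0(0,0)$ is independent of $\epsilon$, so the linearisation alone cannot decide Lyapunov stability; that role falls to the cubic term.

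\emph{Localisation of the non-wandering set.} I would use the amplitude $\phi(x,y)=\max\{|x|,|y|\}$ together with the recursion $u_{n+1}=u_n\bigl(2-\tfrac1\epsilon u_n^2\bigr)-u_{n-1}$. A one-line estimate shows that if $\phi(u_{n-1},u_n)>2\sqrt{|\epsilon|}$ and $|u_n|\ge|u_{n-1}|$, then $\bigl|2-\tfrac1\epsilon u_n^2\bigr|>2$ forces $|u_{n+1}|>|u_n|$, and the growth becomes geometric, so $\phi$ strictly increases to $+\infty$ along the forward orbit; the opposite case $|u_{n-1}|\ge|u_n|$ is the same statement applied to $f_0^{-1}$, available through the reversor $\sigma_2$. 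Hence every point with $\phi>2\sqrt{|\epsilon|}$ is wandering, and as $f_0$ is a polynomial diffeomorphism its non-wandering set is closed and invariant, so it is contained in $\bigl[-2\sqrt{|\epsilon|},2\sqrt{|\epsilon|}\bigr]^2$. (For $\epsilon>0$ the two-cycle $\{(2\sqrt\epsilon,-2\sqrt\epsilon),(-2\sqrt\epsilon,2\sqrt\epsilon)\}$ sits on the boundary of this square, so it cannot be shrunk to an open one; this is not needed for the proof but shows sharpness.)

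\emph{Instability for $\epsilon<0$.} This is immediate: with $\epsilon<0$ the recursion reads $u_{n+1}=2u_n-u_{n-1}+\tfrac1{|\epsilon|}u_n^3$, so from $(u_{-1},u_0)=(0,\eta)$ with small $\eta>0$ one gets $0<u_0<u_1<u_2<\cdots$ with $u_{n+1}-u_n\ge u_0^3/|\epsilon|>0$; the orbit leaves every fixed ball about the origin, and $\eta$ is arbitrarily small, so the origin is not stable.

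\emph{Stability for $\epsilon>0$ (the main point).} In the ``kicked'' variables $q=u_n,\ p=u_n-u_{n-1}$ the map becomes the area-preserving, $\sigma_2$-reversible map $(q,p)\mapsto\bigl(q+p-\tfrac1\epsilon q^3,\ p-\tfrac1\epsilon q^3\bigr)$, a perturbation of the shear $(q,p)\mapsto(q+p,p)$ whose leading nonlinearity is generated by the quartic \emph{potential well} $\tfrac1{4\epsilon}q^4$. I would first build an approximate first integral: starting from the degenerate linearly invariant form $\tfrac12(x-y)^2$ and solving at the next order the cohomological equation $G\circ L-G=R$ (which one checks is solvable, its only obstruction lying in the one-dimensional space spanned by $(x-y)^4$), one obtains $\Phi(x,y)=\tfrac12(x-y)^2+\tfrac1{4\epsilon}x^2y^2+\cdots$, positive definite for $\epsilon>0$, with $\Phi\circ f_0-\Phi=O\bigl(\lvert(x,y)\rvert^6\bigr)$; iterating gives conservation to any prescribed order, so the level sets of $\Phi$ are approximate barriers. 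Since the map is genuinely non-integrable this yields only polynomially long confinement, not stability, so to close the argument I would invoke a Moser/KAM-type invariant-curve theorem for degenerate parabolic fixed points of area-preserving (here also reversible) surface maps: after the blow-up adapted to the scaling $p\sim q^2$ the map is a small perturbation of the twist map generated by $H=\tfrac12p^2+\tfrac1{4\epsilon}q^4$, whose definiteness is precisely the hypothesis $\epsilon>0$ and supplies the required nondegeneracy. The theorem then produces $f_0$-invariant closed curves encircling the origin in every neighbourhood of it; each bounds a forward-invariant disc, so orbits starting inside stay inside, and letting the curves shrink to the origin gives Lyapunov stability. The delicate step — the one I expect to cost the most effort — is verifying the hypotheses of that invariant-curve theorem for this degenerate (quartic rather than generic quadratic) parabolic point and reading off the sign condition that selects $\epsilon>0$.
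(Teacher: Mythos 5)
Your treatment of three of the four claims is sound but follows a more self-contained route than the paper. For the localisation of the non-wandering set the paper simply rewrites $f_0$ as the difference equation $\tfrac1\epsilon x_{n+1}^3+x_n-2x_{n+1}+x_{n+2}=0$ and quotes the general theorem of Li and Malkin \cite{LM1}, whose bound is given by the outer roots of $|\tfrac1\epsilon|x^3-4x$, i.e.\ $\pm2\sqrt{|\epsilon|}$; your $\max\{|x|,|y|\}$ estimate is in effect a direct proof of that theorem in this special case, and it buys a little extra (the explicit escape mechanism, the geometric growth, and the observation that the period-two orbit $\{(\pm2\sqrt\epsilon,\mp2\sqrt\epsilon)\}$ makes the square sharp), at the cost of the small amount of bookkeeping needed to pass from ``$\phi$ strictly increases along the forward or backward orbit'' to ``the point is wandering''. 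Your instability argument for $\epsilon<0$ is likewise a direct escape estimate where the paper again leans on a citation; it is correct and arguably cleaner.

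The one genuine gap is the claim you yourself flag as the delicate step: stability for $\epsilon>0$. As you note, the approximate first integral $\Phi$ only yields confinement for polynomially many iterates, so everything rests on the ``Moser/KAM-type invariant-curve theorem for degenerate parabolic fixed points'' that you invoke but neither state nor verify; for a quartic-degenerate parabolic point the standard twist theorems do not apply off the shelf, and checking the twist condition after the blow-up $p\sim q^2$ is precisely where the sign of $\epsilon$ enters. The paper closes this step by conjugating $f_0$, via $\psi(x,y)=(x+y,2x+y)$, to the normal form $T(x,y)=(x-\tfrac1\epsilon(2x+y)^3,\,x+y+\tfrac1\epsilon(2x+y)^3)$ and citing Sim\'o's theorem on stability of degenerate fixed points of analytic area-preserving maps \cite{S}, which asserts stability exactly when the coefficient $-\tfrac1\epsilon$ of the leading nonlinear term is negative. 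That reference is, in substance, the theorem your sketch is trying to reprove; citing it (or carrying out the blow-up and twist verification in full) is what is needed to turn your outline into a proof.
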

\begin{proof}
Simple calculations show that, indeed, the origin is the unique fixed point of mapping $f_0$, having a double eigenvalue equal to $1$.

Consider the change of variables $\psi(x,y)=(x+y,2x+y)$. It is area--preserving and satisfies relation $f_0\circ \psi=\psi \circ T$, where:
\[
T(x,y)=(x-\frac{1}{\epsilon}(2x+y)^3,x+y+\frac{1}{\epsilon}(2x+y)^3).
\]
As shown in \cite{S}, the origin is a stable fixed point for mapping $T$ if, and only if, $-\frac{1}{\epsilon}<0$; thus $\epsilon$ should be positive.

To locate the non--wandering set of mapping (\ref{2dmap}), we employ the results of \cite{LM1}. We adopt the notation used there, and rewrite our mapping as a difference equation with respect to the variable $x$:
\[
\frac{1}{\epsilon}x_{n+1}^3+x_n-2x_{n+1}+x_{n+2}=0.
\]
The leading term is $\frac{1}{\epsilon}x_{n+1}^3$, while the roots of polynomial $|\frac{1}{\epsilon}|x^3-4x$ are $0$ and $\pm 2\sqrt{|\epsilon|}$, thus the first coordinate of every point of the non--wandering set of (\ref{2dmap}) belongs to $[- 2\sqrt{|\epsilon}|,2\sqrt{|\epsilon}|]$. The second coordinate can be treated in exactly the same way.  
\end{proof}

\begin{figure}
\begin{center}
\includegraphics[scale=0.55]{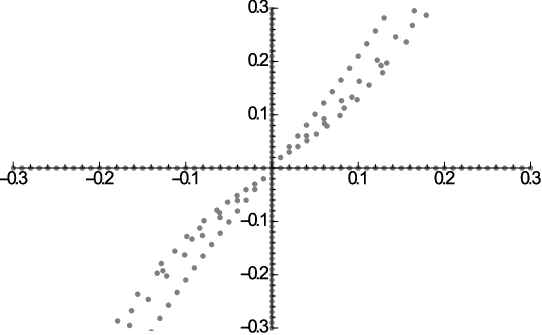}
\includegraphics[scale=0.55]{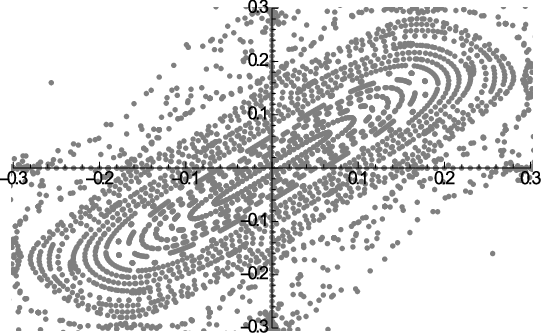}
\includegraphics[scale=0.36]{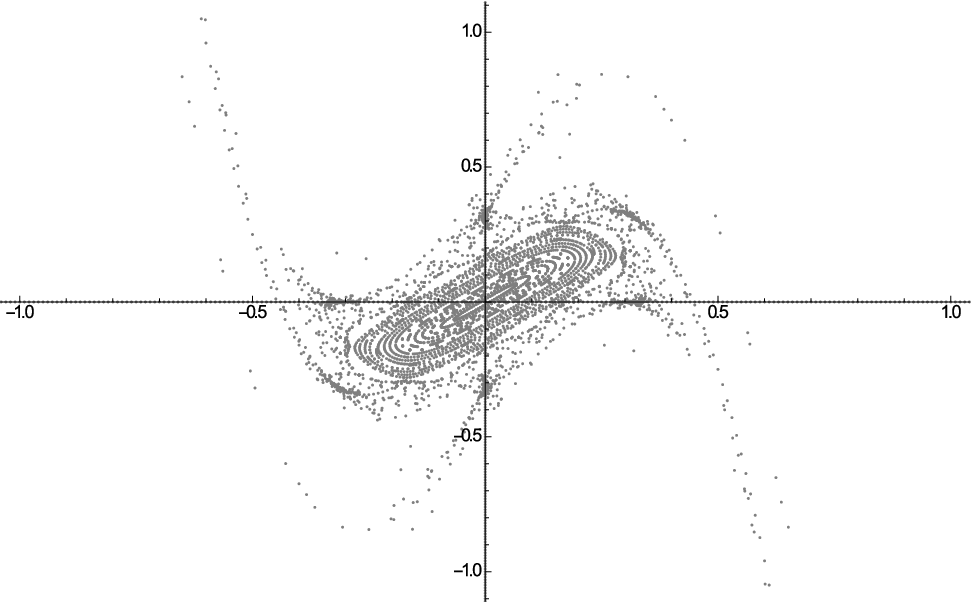}
\includegraphics[scale=0.36]{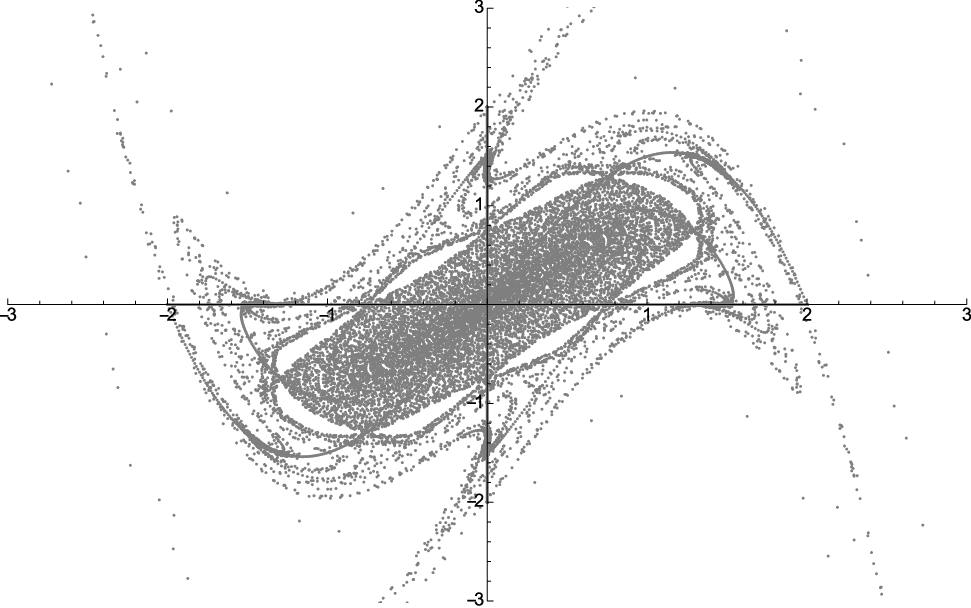}
\end{center}
\caption{Phase portraits for mapping (\ref{2dmap}). First row, left: $\epsilon =-0.1$ and all orbits, except from the fixed point, tend to infinity. First row, right: $\epsilon =0.1$ and the origin is a stable fixed point. Second row, left: $\epsilon=0.1$ global phase portrait. Second row, right: $\epsilon=2$ global phase portrait. The same initial conditions were used, for all four portraits.}
\end{figure}

In figure (1) we present the phase portrait of mapping (\ref{2dmap}), for specific values of the parameter $\epsilon$. We observe that, for $\epsilon <0$, all orbits, with the exception of the fixed point, tend to infinity, while for $\epsilon >0$, the origin is a stable fixed point. For different, positive, values of $\epsilon$, the structure of the phase portrait presents no qualitative change.

We shall now turn our attention to the case where $A\neq 0$, in equation (\ref{mainpdeeq}). 

\section{The $4$--d mapping}
Seeking steady--state solutions of equation (\ref{mainpdeeq}), we set $\dot{u}_n=0$ and $u_{n-2}=x,u_{n-1}=y,u_n=z,u_{n+1}=w$. We thus define the mapping:
\begin{center}
$f:\mathbb{C}^4\rightarrow \mathbb{C}^4,\ f(x,y,z,w)=(y,z,w,-\frac{1}{\epsilon A}|z|^2z-\frac{1}{A}w+\frac{2}{A}z-\frac{1}{A}y-x).$
\end{center}
Observe that, as above, $f(x,y,z,w)\in \mathbb{R}^4$ if, and only if, $(x,y,z,w)\in \mathbb{R}^4$. We therefore restrict our attention to:
\begin{equation}
\label{4dmap}
f:\mathbb{R}^4\rightarrow \mathbb{R}^4,\ f(x,y,z,w)=(y,z,w,-x-\frac{1}{A}y+\frac{2}{A}z-\frac{1}{\epsilon A}z^3-\frac{1}{A}w).
\end{equation}
It is this mapping that we wish to study, in this section.

This mapping is volume--preserving, since $|Df(x,y,z,w)|=1,\ \forall (x,y,z,w)\in \mathbb{R}^4$. Moreover, its inverse is also polynomial:
\begin{center}
$f^{-1}:\mathbb{R}^4\rightarrow \mathbb{R}^4,\ f^{-1}(x,y,z,w)=(-w-\frac{1}{A}x+\frac{2}{A}y-\frac{1}{\epsilon A}y^3-\frac{1}{A}z,x,y,z)$.
\end{center}
As in the $2$--d case, the following relations hold:
\begin{equation}
\label{symmetries}
(a)\ f \circ \sigma_4  = \sigma_4 \circ f,\ (b)\ f \circ \sigma_5  = \sigma_5 \circ f^{-1},\ (c)\ f \circ \sigma_6  = \sigma_6 \circ f^{-1}
\end{equation}
where:
$$\sigma_4(x,y,z,w)=(-x,-y,-z,-w),\ \sigma_5(x,y,z,w)=(w,z,y,x),$$
$$\sigma_6(x,y,z,w)=(-w,-z,-y,-x).$$

We locate the non--wandering set of mapping (\ref{4dmap}) as follows:
\begin{proposition}
The non--wandering set of mapping (\ref{4dmap}) is contained in
\[
\Big[-\sqrt{|\epsilon A|(2+\frac{4}{|A|})},\sqrt{|\epsilon A|(2+\frac{4}{|A|})}\Big]^4\subset \mathbb{R}^4.
\]
\end{proposition}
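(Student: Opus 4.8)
The plan is to proceed exactly as in the $2$--d case: reduce the study of mapping (\ref{4dmap}) to a scalar difference equation and invoke the results of \cite{LM1}. (Throughout we assume, as the form of $f$ forces, that $A\neq 0$, so that the bound involving $\tfrac{4}{|A|}$ makes sense.)

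First I would set $\dot u_n=0$ in (\ref{mainpdeeq}) and pass to real variables: the orbits of $f$ are precisely the sequences $(u_n)_{n\in\mathbb Z}$ solving
\[
\tfrac{1}{\epsilon}u_n^{3}+A\,u_{n+2}+u_{n+1}-2u_n+u_{n-1}+A\,u_{n-2}=0 .
\]
Adopting the notation of \cite{LM1} (as was done for the $2$--d map), I would shift the index so that the nonlinearity sits in the middle of the window, writing the recurrence as
\[
\tfrac{1}{\epsilon}x_{n+2}^{3}+A\,x_{n+4}+x_{n+3}-2x_{n+2}+x_{n+1}+A\,x_{n}=0 ,
\]
whose leading term is $\tfrac{1}{\epsilon}x_{n+2}^{3}$.

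Next I would read off the comparison polynomial. The coefficients of the non--leading (linear) terms are $A,1,-2,1,A$, and the sum of their absolute values is $2|A|+4$; the criterion of \cite{LM1} then involves the polynomial $|\tfrac{1}{\epsilon}|x^{3}-(2|A|+4)x$, whose real roots are $0$ and $\pm\sqrt{|\epsilon|\,(2|A|+4)}$. Since $|\epsilon|\,(2|A|+4)=|\epsilon A|\bigl(2+\tfrac{4}{|A|}\bigr)$, the root of largest modulus is precisely $\sqrt{|\epsilon A|(2+\tfrac{4}{|A|})}$. By \cite{LM1}, every term $x_n$ of a sequence lying in the non--wandering set of the recurrence — hence every coordinate of a point of the non--wandering set of $f$, since such a point is just four consecutive terms $u_{n-2},u_{n-1},u_n,u_{n+1}$ of such a sequence — satisfies $|x_n|\le\sqrt{|\epsilon A|(2+\tfrac{4}{|A|})}$, which gives the stated $4$--cube.

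The only point that needs care is the passage from the heuristic ``at an index where $|x_n|$ is largest the cubic term must be balanced by the linear ones, so $|\tfrac{1}{\epsilon}|\,|x_n|^{3}\le(2|A|+4)|x_n|$'' to a rigorous statement about the (a priori non-compact) non--wandering set, the subtlety being that the supremum of $|x_n|$ along a bi--infinite orbit need not be attained. This is exactly what the theorem quoted from \cite{LM1} supplies; as in the $2$--d case, the work left to us is merely to verify its hypotheses — a polynomial recurrence with a single dominant monomial — and to compute the comparison polynomial correctly, which is the elementary bookkeeping above. Volume--preservation of $f$, the explicit inverse, and the symmetries (\ref{symmetries}) play no role in this particular estimate.
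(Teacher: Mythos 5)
Your proposal is correct and follows essentially the same route as the paper: rewrite the orbit condition of (\ref{4dmap}) as a scalar polynomial difference equation, identify the dominant cubic monomial, and apply the bounded non--wandering set criterion of \cite{LM1} via the comparison polynomial whose nonzero roots give the half--width of the cube. The only cosmetic difference is that you keep the recurrence in the form with coefficients $A,1,-2,1,A$ rather than dividing through by $A$ as the paper does (obtaining coefficients $1,\tfrac1A,-\tfrac2A,\tfrac1A,1$ and leading term $\tfrac{1}{\epsilon A}x_{n+2}^3$), and you correctly observe that both normalizations yield the identical bound $\sqrt{|\epsilon A|(2+\tfrac{4}{|A|})}$.
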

\begin{proof}
The result follows from \cite{LM1}. We rewrite mapping (\ref{4dmap}) as a difference equation, with respect to $x$:
\[
\frac{1}{\epsilon A}x_{n+2}^3+x_n+\frac{1}{A}x_{n+1}-\frac{2}{A}x_{n+2}+\frac{1}{A}x_{n+3}+x_{n+4}=0.
\]
The leading term is $\frac{1}{\epsilon A}x_{n+2}^3$, while the roots of polynomial $|\frac{1}{\epsilon A}|x^3-(2+\frac{4}{|A|})x$ are $0,\ \pm \sqrt{|\epsilon A|(2+\frac{4}{|A|})}$, thus the projection of the non--wandering set in the first axis is contained in $\Big[-\sqrt{|\epsilon A|(2+\frac{4}{|A|})},\sqrt{|\epsilon A|(2+\frac{4}{|A|})}\Big]$.

The inclusions concerning the projections on the other axes are proved in the same way.
\end{proof}

Let us now proceed to study the mappings fixed points.
\subsection{Fixed points and their stability}
In general, mapping $f$ possesses three fixed points, namely:
$$(0,0,0,0),\ (\pm \sqrt{-2\epsilon A},\pm \sqrt{-2\epsilon A},\pm \sqrt{-2\epsilon A},\pm \sqrt{-2\epsilon A}).$$
To study their stability, we shall use the following:
\begin{lemma}
Polynomial $p(x)=x^4+ax^3+bx^2+ax+1$, where $a,b\in \mathbb{R}$, possesses four real roots if, and only if, one of the following conditions is satisfied.
\begin{enumerate}
\item {$b<-2$ and $-\frac{1}{2}\sqrt{4+4b+b^2}<a<\frac{1}{2}\sqrt{4 + 4 b + b^2}$.}
\item {$b>6$ and $-\frac{1}{2}\sqrt{4+4b+b^2}<a<-\sqrt{-8 + 4 b}$.}
\item {$b>6$ and $\sqrt{-8 +4b}<a<\frac{1}{2}\sqrt{4+4b+b^2}$.}
\end{enumerate}
These roots are of the form $\lambda_1,\lambda_1^{-1},\lambda_2,\lambda_2^{-1}$, where $\lambda_1,\lambda_2\in (0,1)$. 
\end{lemma}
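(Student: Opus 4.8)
The plan is to analyze the palindromic (self-reciprocal) structure of $p(x)=x^4+ax^3+bx^2+ax+1$. Since $p$ is palindromic, $x=0$ is not a root, so we may divide by $x^2$ and substitute $t=x+x^{-1}$. Writing $p(x)/x^2 = (x^2+x^{-2}) + a(x+x^{-1}) + b = (t^2-2) + at + b$, we reduce the quartic to the quadratic $q(t)=t^2+at+(b-2)$. The key elementary fact is that a real number $x\neq 0$ satisfies $x+x^{-1}=t$ for some real $t$ with $|t|\ge 2$, and conversely every real $t$ with $|t|\ge 2$ arises from two real values of $x$ (which coincide precisely when $t=\pm 2$), whereas $|t|<2$ forces $x$ to be non-real (a pair of complex conjugates on the unit circle). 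Hence $p$ has four real roots if and only if $q(t)$ has two real roots $t_1,t_2$ (counted with multiplicity) both satisfying $|t_i|\ge 2$.

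Next I would translate the condition ``$q$ has both roots real and of modulus $\ge 2$'' into inequalities on $a,b$. Realness of the roots of $q$ requires the discriminant $a^2-4(b-2)\ge 0$, i.e.\ $a^2 \ge 4b-8$. For both roots to lie outside the open interval $(-2,2)$ I would impose the standard location-of-roots conditions: either both roots are $\ge 2$, or both are $\le -2$, or one is $\le -2$ and the other $\ge 2$. Using $q(2)=4+2a+b-2 = b+2+2a$, $q(-2)=b+2-2a$, and the vertex abscissa $-a/2$, these become sign/position conditions on $q(\pm2)$ and $-a/2$. Carrying this out and simplifying (note $4+4b+b^2=(b+2)^2$, so $\tfrac12\sqrt{4+4b+b^2}=\tfrac12|b+2|$) should produce exactly the three listed cases: case 1 corresponds to $q$ having one root $\le-2$ and one root $\ge 2$ (which, after checking signs, is equivalent to $b<-2$ together with $q(2)>0>$ nothing — more precisely $q(\pm2)\le 0$, giving $|a|<\tfrac12|b+2| = -\tfrac12(b+2)$ when $b<-2$), while cases 2 and 3 correspond to both roots on the same side ($\le -2$ or $\ge 2$ respectively), which needs $q(\pm 2)\ge 0$, the vertex outside $[-2,2]$, and the discriminant condition $a^2\ge 4b-8$, i.e.\ $|a|\ge \sqrt{4b-8}$; positivity of $q(\pm 2)$ together with $b>6$ then forces the $b>6$ threshold and the bounds $\sqrt{-8+4b}<|a|<\tfrac12(b+2)$ with the sign of $a$ selecting case 2 versus case 3.

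Finally, for the normal-form statement about the roots: once the four roots are known to be real and nonzero, palindromicity gives that they come in reciprocal pairs, so they are $\lambda_1,\lambda_1^{-1},\lambda_2,\lambda_2^{-1}$ for some real $\lambda_1,\lambda_2$; since $x+x^{-1}=t_i$ with $|t_i|\ge 2$ and (one checks) the product of all four roots equals $p(0)=1>0$ while the relevant $t_i$ turn out to be negative in all three cases (the inequalities force $q(2)$ and $q(-2)$ to pin the roots $t_i$ to the negative side, or at least one can normalize the naming), the reciprocal partners have the same sign and we may choose representatives $\lambda_1,\lambda_2\in(0,1)$. I expect the main obstacle to be the bookkeeping in the second paragraph: correctly matching the three regimes of root placement for $q$ to the three cases in the statement, and verifying that the boundary/degenerate subcases (double roots of $q$, roots exactly equal to $\pm 2$) are handled by the strict-versus-nonstrict inequalities as written — in particular confirming that the open conditions in the statement exactly exclude the cases where $p$ acquires a repeated or unit-modulus root, and reconciling why case 1 uses strict $b<-2$ rather than $b\le -2$.
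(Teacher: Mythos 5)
Your proof is correct in substance but takes a genuinely different route from the paper. The paper splits the lemma into two independent arguments: it establishes real-rootedness by constructing the Sturm sequence of $p$ and reading off the sign conditions $\tfrac{3a^2}{16}-\tfrac b2>0$, $(8+a^2-4b)(6a^2-2b(2+b))<0$, $(2+b)^2-4a^2>0$, and it obtains the reciprocal-pair structure separately via the Cayley transform $s\mapsto\frac{1+s}{1-s}$ and the Routh--Hurwitz criterion applied to the biquadratic $(2+2a+b)-2(b-6)s^2+(2-2a+b)s^4$. You instead exploit the palindromic symmetry directly: the substitution $t=x+x^{-1}$ reduces $p$ to $q(t)=t^2+at+(b-2)$, and both halves of the lemma fall out of one analysis --- four real roots of $p$ correspond exactly to two real roots of $q$ with $|t|>2$, and the reciprocal pairing $\{\lambda,\lambda^{-1}\}$ is automatic since each admissible $t$ yields the two roots of $x^2-tx+1=0$, whose product is $1$. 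Your root-location bookkeeping checks out: $q(\pm2)=b+2\pm 2a<0$ gives case 1 (one root of $q$ on each side of $[-2,2]$, forcing $b<-2$ and $|a|<\tfrac12|b+2|$), while $q(2)>0$, vertex $-a/2>2$ and discriminant $a^2>4b-8$ give case 2, with case 3 its mirror image under $a\mapsto -a$; the interval in cases 2--3 is nonempty precisely because $(b+2)^2-(16b-32)=(b-6)^2>0$. Your approach is more elementary (no Sturm sequences, no change to the Hurwitz setting) and has the additional virtue of making the strictness of the inequalities transparent: the open conditions exclude exactly the degenerate situations $t=\pm2$ (roots at $\pm1$) and a double root of $q$.

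One caveat on your final paragraph: your assertion that ``the relevant $t_i$ turn out to be negative in all three cases'' is not right --- in case 1 the two roots of $q$ straddle $[-2,2]$ and so have opposite signs, and in case 2 both are positive --- so the four roots of $p$ need not all be positive, and the literal claim $\lambda_1,\lambda_2\in(0,1)$ holds only in case 2. This is really a defect of the lemma's statement rather than of your argument (the paper's own proof likewise establishes only the pairing $\lambda,\lambda^{-1}$, not the positivity), and what the application actually uses is $|\lambda_i|\in(0,1)$, which your substitution delivers immediately: for $|t|>2$ the pair $\{x,x^{-1}\}$ consists of one root inside and one outside the unit circle. You should either state the conclusion with absolute values or drop the sign claim rather than trying to argue all $t_i$ are negative.
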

\begin{proof}
To determine for which values of $a,b,\ p(x)$ possesses real roots, we shall employ the Sturm theorem (see \cite{B}).

We construct the Sturm polynomials for $p(x)$, as follows:
\begin{align*}
p_0(x) &=p(x)=x^4+ax^3+bx^2+ax+1 \\
p_1(x) &=p'(x)=a+2bx+3ax^2+4x^3 \\
p_2(x) &=-1+\frac{a^2}{16}+(\frac{ab}{8}-\frac{3a}{4})x+(\frac{3 a^2}{16}-\frac{b}{2})x^2\\
p_3(x) &=-\frac{16(8+a^2-4b)(a(6-b)+6a^2x-2b(2+b)x)}{(3a^2-8 b)^2}\\
p_4(x) &=-\frac{(3 a^2 - 8 b)^2 (4 a^2 - (2 + b)^2)}{64 (-3 a^2 + b (2 + b))^2}.
\end{align*}
We remind that, as dictated by Sturm's theorem, $p_i(x),i\geq 2$, is defined to be the remainder of the division of $p_{i-2}(x)$ by $p_{i-1}(x)$, with its sign changed.

In order for $p(x)$ to possess $4$ real roots, it is sufficient and necessary to have:
$$\frac{3 a^2}{16}-\frac{b}{2}>0,\ (8+a^2-4b)(6a^2-2b(2+b))<0,\ (2+b)^2 -4a^2>0,$$
from which inequalities we deduce the conditions given in the statement of the lemma. 

To prove that these roots are of the form claimed above, note that mapping:
$$s\mapsto \frac{1+s}{1-s},$$
maps the left, open, half--plane of $\mathbb{C}$ to the closed unit disk. In the $s$--variable, polynomial $p(x)$ takes the following form (after multiplication with $(1-s)^4$):
$$2+2a+b-2(-6+b)s^2+(2-2a+b)s^4.$$
The Routh--Hurwitz criterion (see \cite{ST}), ensures that the roots of this polynomial appear in pairs and that members of the same pair are of equal norm, but opposite sign. Mapping $s \mapsto \frac{1+s}{1-s}$ maps such pairs to pairs of the form $\lambda,\lambda^{-1}$.   
\end{proof}

We are now able to state results concerning the stability of fixed points mapping (\ref{4dmap}) possesses.
\begin{corollary}
\begin{enumerate}\item[]
\item {The origin is always a fixed point for mapping (\ref{4dmap}). Its eigenvalues are:
\begin{itemize}
\item {two pairs of complex--conjugate numbers, for $A\in (-\infty <\frac{-2+\sqrt{2}}{4})\cup (2,+\infty)$}
\item {all real, for $A\in [\frac{-2+\sqrt{2}}{4},0)$.}
\item {two real and a pair of complex--conjugate eigenvalues, for $A\in (0,2]$.}
\end{itemize}}
\item {For $\epsilon A<0$, mapping $f$ possesses two more fixed points, symmetric to each other, namely $(\pm \sqrt{-2\epsilon A},\pm \sqrt{-2\epsilon A},\pm \sqrt{-2\epsilon A},\pm \sqrt{-2\epsilon A})$. Their eigenvalues are:
\begin{itemize}
\item {two real and a pair of complex--conjugate numbers, for $A\in (-1,0)$.}
\item {all real, for $A\in (-\infty,-1]\cup (0,+\infty)$.}
\end{itemize}}
\item {In any case, the eigenvalues are necessarily of the form $\lambda_1,\lambda_1^{-1},\lambda_2,\lambda_2^{-1}$.}
\end{enumerate}
\end{corollary}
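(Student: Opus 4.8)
The plan is to linearise $f$ at each of its fixed points and analyse the characteristic polynomial of $Df$, using the Lemma to dispatch the ``all real'' sub-cases. Since $f$ is a shift-type (companion) map, $Df$ at any point is a companion matrix whose characteristic polynomial can be read straight off the linearised recurrence. At the origin the cubic term $-\frac{1}{\epsilon A}z^{3}$ contributes nothing, so this polynomial is
\[
p_{0}(\lambda)=\lambda^{4}+\frac{1}{A}\lambda^{3}-\frac{2}{A}\lambda^{2}+\frac{1}{A}\lambda+1,
\]
which is exactly the polynomial $x^{4}+ax^{3}+bx^{2}+ax+1$ of the Lemma with $a=\frac{1}{A}$, $b=-\frac{2}{A}$. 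At a nontrivial fixed point one has $z^{2}=-2\epsilon A$, so the derivative of $\frac{2}{A}z-\frac{1}{\epsilon A}z^{3}$ equals $\frac{2}{A}-\frac{3}{\epsilon A}z^{2}=\frac{2}{A}+6$, independently of $\epsilon$; hence the characteristic polynomial there is
\[
p_{1}(\lambda)=\lambda^{4}+\frac{1}{A}\lambda^{3}-\Bigl(\frac{2}{A}+6\Bigr)\lambda^{2}+\frac{1}{A}\lambda+1,
\]
again of this form, now with $a=\frac{1}{A}$, $b=-\frac{2}{A}-6$. Both polynomials are palindromic with real coefficients, so their roots are closed under $\lambda\mapsto\lambda^{-1}$ and under complex conjugation; this gives item~3 at once, and it is also the content of the Routh--Hurwitz step in the proof of the Lemma.

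For the finer dichotomies in items~1 and~2 I would substitute $\mu=\lambda+\lambda^{-1}$: dividing the characteristic polynomial by $\lambda^{2}$ turns it into the quadratic $\mu^{2}+a\mu+(b-2)=0$. A reciprocal pair $\{\lambda,\lambda^{-1}\}$ is a pair of real numbers iff the corresponding root $\mu$ is real with $|\mu|\ge 2$, it is a complex-conjugate pair on the unit circle iff $\mu$ is real with $|\mu|<2$, and when $\mu$ is non-real the four roots form a conjugate-reciprocal quadruple, i.e.\ two complex-conjugate pairs off the circle. So the spectral type is governed by the sign of the discriminant $\Delta_{\mu}:=a^{2}-4b+8$ of the $\mu$-quadratic and, when $\Delta_{\mu}\ge 0$, by the position of the roots of $g(\mu):=\mu^{2}+a\mu+b-2$ relative to $[-2,2]$, which I would detect from the signs of $g(2)$, $g(-2)$ and $a$. (For the purely-real sub-cases one may instead quote the Lemma directly after using $4+4b+b^{2}=(b+2)^{2}$ to simplify its conditions; that identity makes condition~(1) vacuous at both fixed points.)

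Carrying this out at the origin ($a=\frac1A$, $b=-\frac2A$) gives $\Delta_{\mu}=\frac{8A^{2}+8A+1}{A^{2}}$, so $\Delta_{\mu}<0$ exactly for $A\in\bigl(\frac{-2-\sqrt2}{4},\frac{-2+\sqrt2}{4}\bigr)$, where the eigenvalues are two complex-conjugate pairs; and when $\Delta_{\mu}\ge 0$ one computes $g(2)=b+2a+2=2>0$ and $g(-2)=b-2a+2=\frac{2(A-2)}{A}$, from which a short sign analysis yields: both roots of $g$ exceed $2$ for $A\in[\frac{-2+\sqrt2}{4},0)$ (four real eigenvalues); one root lies below $-2$ and one in $(0,2)$ for $A\in(0,2]$ (two real, one complex pair); both roots lie in $(-2,2)$ for $A\in(-\infty,\frac{-2+\sqrt2}{4})\cup(2,+\infty)$ (two complex pairs) --- this is item~1, with the equality cases (e.g.\ $A=\frac{-2+\sqrt2}{4}$, $A=2$) producing a repeated eigenvalue but not changing the type, so the closed-interval statements stand. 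At a nontrivial fixed point ($a=\frac1A$, $b=-\frac2A-6$, and $\epsilon A<0$) one finds $\Delta_{\mu}=\frac{32A^{2}+8A+1}{A^{2}}>0$ for every $A\ne0$ (the numerator has negative discriminant), so the two-complex-pair case never occurs; moreover $g(2)=-4<0$ always, so one root of $g$ always exceeds $2$ and contributes a real reciprocal pair, while $g(-2)=-\frac{4(A+1)}{A}$ is positive for $A\in(-1,0)$ and $\le 0$ for $A\in(-\infty,-1]\cup(0,+\infty)$, putting the second root of $g$ in $(-2,2)$ in the first case (two real, one complex pair) and in $(-\infty,-2]$ in the second (four real eigenvalues) --- this is item~2. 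The individual computations are routine; the real work, and where slips are likely, is the bookkeeping: keeping the dictionary between positions of the $\mu$-roots and types of the $\lambda$-eigenvalues consistent across all sign combinations, matching the resulting $A$-ranges against the three-case form of the Lemma, and checking the boundary values $A=\frac{-2+\sqrt2}{4},\,2,\,-1$ at which a double eigenvalue appears.
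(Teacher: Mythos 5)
Your proof is correct, but it takes a genuinely different route from the paper's. The paper identifies the two characteristic polynomials $k_0$, $k_1$ as instances of the palindromic quartic $p(x)=x^4+ax^3+bx^2+ax+1$ of the Lemma (with $a=\tfrac1A$, $b=-\tfrac2A$ and $b=-\tfrac2A-6$ respectively, exactly as you found) and then settles the case distinctions by combining the Lemma's Sturm-sequence conditions with the sign of the \emph{quartic} discriminant, which it computes explicitly at each fixed point; the reciprocal structure of item~3 is obtained there via the M\"obius map $s\mapsto\frac{1+s}{1-s}$ and Routh--Hurwitz. You instead fold the palindromic quartic into the quadratic $\mu^2+a\mu+(b-2)$ via $\mu=\lambda+\lambda^{-1}$ and classify by the sign of $a^2-4b+8$ and of $g(\pm2)$. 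Your computations check out: at the origin $g(2)=2$, $g(-2)=\tfrac{2(A-2)}{A}$, $\Delta_\mu=\tfrac{8A^2+8A+1}{A^2}$; at the nontrivial fixed points $g(2)=-4$, $g(-2)=-\tfrac{4(A+1)}{A}$, $\Delta_\mu=\tfrac{32A^2+8A+1}{A^2}>0$; and these reproduce every interval in the statement, including the boundary values. Your route is more elementary and arguably more transparent than the paper's, since it avoids both the Sturm sequence and the quartic discriminant and delivers the reciprocal pairing and the real/complex dichotomy from one quadratic. One small slip in an aside: condition~(1) of the Lemma is \emph{not} vacuous at the nontrivial fixed points --- for $A>0$ one has $b=-\tfrac2A-6<-2$ and $|a|=\tfrac1A<\tfrac1A+2=\tfrac12|b+2|$, so it is precisely the condition that certifies the all-real case there; this does not affect your main argument, which handles that case correctly through $g(2)<0$ and $g(-2)<0$.
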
 
\begin{proof}
\begin{enumerate}\item[]
\item {The characteristic polynomial of $f$ at the origin is:
\[
k_0(x)=x^4+\frac{1}{A}x^3-\frac{2}{A}x^2+\frac{1}{A}x+1.
\]
It is obtained from polynomial $p(x)$, of the previous lemma, for $a=\frac{1}{A}$ and $b=-\frac{2}{A}$, thus the conclusion. The discriminant of this polynomial equals
\[
\frac{4 (-2 - 31 A - 144 A^2 - 176 A^3 + 64 A^5)}{A^5}.
\]
The conclusions follow from the lemma above and the properties of discriminants of polynomials (see \cite{B}).}

\item {The characteristic polynomial of $f$ at the non--trivial fixed points is:
\[
k_1(x)=x^4+\frac{1}{A}x^3-(6+\frac{2}{A})x^2+\frac{1}{A}x+1.
\]
It is obtained from the polynomial $p(x)$ of the lemma above, for $a=\frac{1}{A},b=-(6+\frac{2}{A})$. Its discriminant equals:
\[
\frac{16 (1 + 17 A + 144 A^2 + 640 A^3 + 1536 A^4 + 1024 A^5)}{A^5}.\]
Hence the conclusion.}
\end{enumerate}
\end{proof}

Since our main intention is to steady--state solitons for equation (\ref{mainpdeeq}), we turn our attention to orbits homoclinic to the origin, for mapping (\ref{4dmap}). 
\subsection{Homoclinics to the origin}

To locate solutions homoclinic to the origin, we restrict our attention to the case where all eigenvalues are real. Thus, according to the results presented above, we let $\ A\in [\frac{-2+\sqrt{2}}{4},0)$. We also let $\epsilon \in [-1,1]$ and, in view of section 2, we expect that homoclinics should exist for positive values of $\epsilon$. 

For these parameter values, the origin is a saddle point for mapping (\ref{4dmap}). It is easy to verify that $\pm 1$ are not eigenvalues of $Jf(0,0,0,0)$; thus the Jacobian matrix possesses two eigenvalues of absolute value less than $1$, which we shall denote by $\lambda_1,\ \lambda_2$, and two eigenvalues of absolute values greater than $1$, which we shall denote by $\lambda_3=\lambda_1^{-1},\ \lambda_4=\lambda_2^{-1}$.

The origin is therefore a hyperbolic saddle, having a two--dimensional stable and a two--dimensional unstable manifold, denoted by $W^s(O),W^u(O)$ respectively. It is our purpose here to determine if these two manifolds intersect.

To achieve that, we employ the Parametrization Method. We shall briefly sketch this method here, as applied in our case, however the interested reader should consult \cite{M1,M2,M3,M4,M5,M6} for applications of this method to the study of maps and \cite{HCF} for many more applications. 

Manifolds $W^s(O),W^u(O)$ are analytic, according to the Stable Manifold Theorem (see, for example, \cite{ST}) and can therefore be represented, at least locally, using analytic mappings. That is, the stable and the unstable manifolds can be thought of as the images of mappings $S^s,S^u:\mathbb{R}^2\rightarrow \mathbb{R}^4$, respectively. 

Restricted on the stable (unstable) manifold, mapping (\ref{4dmap}) acts as a contraction (dilation), with contraction (dilation) rates $\lambda_1,\lambda_2$ ($\lambda_3,\lambda_4$). Therefore, one gets the following equations:
\begin{eqnarray}
f\circ S^s(u,v)&=&S^s(\lambda_1u,\lambda_2v) \label{homeq1} \\ 
f\circ S^u(u,v)&=&S^u(\lambda_3u,\lambda_4v) \label{homeq2}.
\end{eqnarray}

The symmetry properties (\ref{symmetries}) of mapping (\ref{4dmap}) are reflected on these two manifolds. Indeed, note that:
\begin{equation*}
\begin{split}
f\circ S^s(u,v)&=S^s(\lambda_1u,\lambda_2v) \Rightarrow\\
\Rightarrow \sigma_4 \circ f\circ S^s(u,v)&=\sigma_4 \circ S^s(\lambda_1u,\lambda_2v)\Rightarrow \\
\xRightarrow[]{\ref{symmetries}(a)}f\circ \sigma_4 \circ S^s(u,v)&=\sigma_4 \circ S^s(\lambda_1u,\lambda_2v),
\end{split}
\end{equation*}
meaning that $\sigma _4 \circ S^s$ is also a stable manifold of the origin. Thus, the image of $S^s$ is symmetric, with respect to $\sigma_4$. The same holds for the image of $S^u$. 

Note also that:
\begin{equation*}
\begin{split}
f\circ S^s(u,v)&=S^s(\lambda_1u,\lambda_2v) \Rightarrow\\
\Rightarrow \sigma_5 \circ f\circ S^s(u,v)&=\sigma_5 \circ S^s(\lambda_1u,\lambda_2v)\Rightarrow \\
\xRightarrow[]{\ref{symmetries}(b)}f^{-1}\circ \sigma_5 \circ S^s(u,v)&=\sigma_5 \circ S^s(\lambda_1u,\lambda_2v),
\end{split}
\end{equation*}
Thus, $\sigma_5 \circ S^s$ is a stable manifold for $f^{-1}$ and therefore an unstable manifold for $f$; that is, $S^u$ is the image of $S^s$ under the symmetry $\sigma_5$ and the same of course holds for symmetry $\sigma _6$.

We begin by computing the stable manifold of the origin. Since this manifold is analytic, it can be represented as a, convergent, power series:
\[
S^s:\mathbb{R}^2\rightarrow \mathbb{R}^4,\ S^s(u,v)=
\begin{bmatrix}
\sum \limits _{n=0}^{+\infty}\sum \limits _{m=0}^{+\infty}a_1^{nm}u^nv^m \\
\sum \limits _{n=0}^{+\infty}\sum \limits _{m=0}^{+\infty}a_2^{nm}u^nv^m \\
\sum \limits _{n=0}^{+\infty}\sum \limits _{m=0}^{+\infty}a_3^{nm}u^nv^m \\
\sum \limits _{n=0}^{+\infty}\sum \limits _{m=0}^{+\infty}a_4^{nm}u^nv^m
\end{bmatrix}.
\]
In the notation above, $a_i^{nm},\ i=1,..,4$, stands for the $i$-th coefficient of order $n+m$; that is, the exponent is not a power.

The left--hand side of equation (\ref{homeq1}) reads as:
\[
\begin{bmatrix}
\sum \limits _{n=0}^{+\infty}\sum \limits _{m=0}^{+\infty}a_2^{nm}u^nv^m \\
\sum \limits _{n=0}^{+\infty}\sum \limits _{m=0}^{+\infty}a_3^{nm}u^nv^m \\
\sum \limits _{n=0}^{+\infty}\sum \limits _{m=0}^{+\infty}a_4^{nm}u^nv^m \\
\begin{multlined}
-\sum \limits _{n=0}^{+\infty}\sum \limits _{m=0}^{+\infty}a_1^{nm}u^nv^m-\frac{1}{A}\sum \limits _{n=0}^{+\infty}\sum \limits _{m=0}^{+\infty}a_2^{nm}u^nv^m+\frac{2}{A}\sum \limits _{n=0}^{+\infty}\sum \limits _{m=0}^{+\infty}a_3^{nm}u^nv^m -\\ -\frac{1}{\epsilon A}(\sum \limits _{n=0}^{+\infty}\sum \limits _{m=0}^{+\infty}a_3^{nm}u^nv^m)^3-\frac{1}{A}\sum \limits _{n=0}^{+\infty}\sum \limits _{m=0}^{+\infty}a_4^{nm}u^nv^m
\end{multlined}
\end{bmatrix},
\]
while the right--hand side equals:
\[
\begin{bmatrix}
\sum \limits _{n=0}^{+\infty}\sum \limits_{m=0}^{+\infty}\lambda_1^n\lambda_2^m a_1^{nm}u^nv^m \\
\sum \limits _{n=0}^{+\infty}\sum \limits _{m=0}^{+\infty}\lambda_1^n\lambda_2^ma_2^{nm}u^nv^m \\
\sum \limits _{n=0}^{+\infty}\sum \limits _{m=0}^{+\infty}\lambda_1^n\lambda_2^ma_3^{nm}u^nv^m \\
\sum \limits _{n=0}^{+\infty}\sum \limits _{m=0}^{+\infty}\lambda_1^n\lambda_2^ma_4^{nm}u^nv^m 
\end{bmatrix}.
\]  
Since:
$$(\sum _{n=0}^{+\infty}\sum _{m=0}^{+\infty}a_3^{nm}u^nv^m)^3=\sum_{n=0}^{+\infty}\sum_{m=0}^{+\infty}\sum_{i=0}^n\sum_{j=0}^m\sum_{k=0}^i\sum_{l=0}^ja_3^{n-i,m-j}a_3^{i-k,j-l}a_3^{k,l}u^nv^m,$$
by equating terms of the same degree, we arrive at the following system:
\begin{equation}
\label{homsystem}
\begin{split}
-\lambda_1^n\lambda_2^ma_1^{nm}+a_2^{nm}&=0 \\
-\lambda_1^n\lambda_2^ma_2^{nm}+a_3^{nm}&=0 \\
-\lambda_1^n\lambda_2^ma_3^{nm}+a_4^{nm}&=0 \\
-a_1^{nm}-\frac{1}{A}a_2^{nm}+\frac{2}{A}a_3^{nm}-(\frac{1}{A}+\lambda_1^n\lambda_2^m)a_4^{nm}&= \\
=\frac{1}{\epsilon A}\sum _{i=0}^{n}\sum_{j=0}^{m}\sum_{k=0}^i\sum_{l=0}^ja_3^{n-i,m-j}a_3^{i-k,j-l}a_3^{k,l}
\end{split}
\end{equation}

Note that system (\ref{homsystem}) is a linear system, with respect to the unknowns $a_i^{nm},\ i=1,..,4$. It can be used to find the coefficients of parametrization $S^s$ of arbitrary, finite, order, as long as the lower--order coefficients are known and the eigenvalues $\lambda_i,i=1,2$ satisfy a non--resonance condition.

For example, for $n=m=0$, system (\ref{homsystem}) becomes:
\begin{eqnarray*}
-a_1^{00}+a_2^{00}&=0 \\ 
-a_2^{00}+a_3^{00}&=0 \\
-a_3^{00}+a_4^{00}&=0 \\
-a_1^{00}-\frac{1}{A}a_2^{00}+\frac{2}{A}a_3^{00}-\frac{1}{A}a_4^{00}&=0
\end{eqnarray*}
from which we conclude that $a_i^{00}=0,\ i=1,..,4$. This reflects the fact that the stable manifold of the origin contains the origin; thus parametrization $S^s$ should satisfy $S^s(0,0)=(0,0,0,0)$.

Similarly, for $n=1,\ m=0$ and $n=0,\ m=1$ we get:
\[(a_1^{10},a_2^{10},a_3^{10},a_4^{10})=w_1,\  (a_1^{01},a_2^{01},a_3^{01},a_4^{01})=w_2,
\]
where $w_1,w_2$ are the eigenvectors corresponding to the eigenvalues $\lambda_1,\lambda _2$. This reflects the fact that the tangent space at the origin of the stable manifold should be the stable eigenspace of the origin, i.e., $T_0(Im(S^s))=E^s(0)$ and therefore:
\[
\frac{\partial S^s}{\partial u}(0,0)=w_1,\ 
\frac{\partial S^s}{\partial v}(0,0)=w_2.
\]
The low--order terms are therefore known and one can proceed to the computation of higher--order terms, using system (\ref{homsystem}), recursively.

The unstable manifold $S^u$ can be computed in exactly the same way, the only difference being that eigenvalues $\lambda_1,\ \lambda_2$ should be replaced with the unstable ones $\lambda_3,\ \lambda_4$.

Since, however, the coefficients of both the stable and the unstable manifold can be computed to a finite order, we end up with a polynomial approximation $P^s(u,v)$ for the stable manifold $S^s(u,v)$ and a polynomial approximation $P^u(u,v)$ of the unstable manifold $S^u(u,v)$.

To locate homoclinic points, one should solve equation:
\begin{equation}
\label{homoclinicseq}
P^u(u_1,v_1)=P^s(u_2,v_2).
\end{equation}
Solutions $(u_1,v_1,u_2,v_2)$ correspond to homoclinic points of mapping (\ref{4dmap}). These points can be represented both as $P^u(u_1,v_1)$ and as $P^s(u_2,v_2)$.

We intend to solve equation (\ref{homoclinicseq}) numerically. Therefore, its solutions are approximations of analytical solutions. A measure of the validity of these approximations is the norm $\|P^u(u_1,v_1)-P^s(u_2,v_2)\|$. One should accept only the approximate solutions for which this norm is smaller than a predetermined value.

Here, we proceeded as follows:
\begin{enumerate}
\item {Initial values for parameters $(\epsilon,A)$ where chosen. Recall that $\epsilon$ should belong to the interval $[-1,1]$, while parameter $A$ belongs to the interval $[\frac{-2+\sqrt{2}}{4},0)$, to ensure that all eigenvalues of the fixed point at the origin are real.}

\item {For these parameter values, we solved system (\ref{homsystem}) for the coefficients of the (un)stable manifold. It was found that the lowest bound for the error $\|P^u(u_1,v_1)-P^s(u_2,v_2)\|$ was achieved when we computed terms up to order $80$. Terms of higher order did not improve the error of our calculations.}

\item {We numerically solved equation (\ref{homoclinicseq}). The solutions, with respect to unknowns $u_1,v_1,u_2,v_2$, should satisfy $\|P^u(u_1,v_1)-P^s(u_2,v_2)\|<10^{-10}$. Numerical solutions which did not fulfil this criterion were rejected.}

\item {If equation (\ref{homoclinicseq}) had a solution fulfilling the criterion above, we concluded that an orbit homoclinic to the origin existed. If no such solution was found, the existence of a homoclinic orbit could not be established.}

\item {We then proceeded to other values of parameters $(\epsilon,A)$.}
\end{enumerate}
The procedure described above permits us to draw the following:

\vspace*{0.3cm}
\textbf{Main numerical result:} For mapping (\ref{4dmap}), orbits homoclinic to the origin exist for all $\epsilon \in (0,1],\ A\in [-0.145,-0.115]$.

\vspace*{0.3cm}
A few observations are in order here.

First, for $\epsilon \in [-1,0)$ no homoclinics were found. This is in accordance with the two--dimensional case, where the fixed point at the origin was stable if, and only if, $\epsilon >0$.

Second, for positive values of $\epsilon$, numerical solutions of equation (\ref{homoclinicseq}) were found, even for values of $A$ not belonging in $[-0.145,-0.115]$. The error bound at these solutions was, however, greater than $10^{-10}$ and they were therefore rejected.

Third, the error bound for the numerical solutions that we accepted was lower for values of $\epsilon$ closer to $0$. The error bound was increasing, for $\epsilon \rightarrow 1$. Concerning parameter $A$, the error bound was found to be minimal in the middle of interval $[-0.145,-0.115]$.    

We should note here that the numerical result presented above implies that equation (\ref{mainpdeeq}) possesses steady--state solitons for $\epsilon>0$ and $A\in [-0.145,-0.115]$. One can, approximately, construct these solitons using the $x$ coordinate of the homoclinics found here.
  
\vspace*{0.3cm}
\textbf{Illustrative example:} Set $A=-0.125,\ \epsilon=0.0004$ and use system (\ref{homsystem}) to compute $P^u,P^s$ up to terms of order $80$. Equation (\ref{homoclinicseq}) possesses solution:
\[
\begin{bmatrix}
u_1'\\
v_1'\\
u_2'\\
v_2'
\end{bmatrix}
=
\begin{bmatrix}
-0.6774898840489101\\
0.1347239986358392\\
0.1347239986358898\\
0.6774898840494726
\end{bmatrix},
\]
which solution corresponds to the homoclinic point: 
\[
P^u(u_1',v_1')=P^s(u_2',v_2')=
\begin{bmatrix}
9.23324715725\cdot 10^{-3}\\
1.32738452775\cdot 10^{-2}\\
1.32738452775\cdot 10^{-2}\\
9.23324715725\cdot 10^{-3}
\end{bmatrix}.
\]
The errors involved in these calculations are found to be of order $10^{-13}$.

In figure $2$, left, the projection on the $x-y$ plane of the orbit of $P^u(u_1',v_1')$ is shown, along with its symmetric orbit. They converge to the origin for both positive and negative iteration. In the same figure, right, the $x$--coordinate of the orbit is shown. It corresponds to a steady--state soliton of equation (\ref{mainpdeeq}).

\begin{figure}
\begin{center}
\includegraphics[scale=0.6]{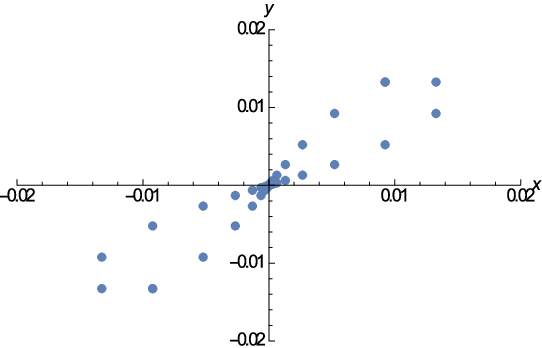}
\includegraphics[scale=0.6]{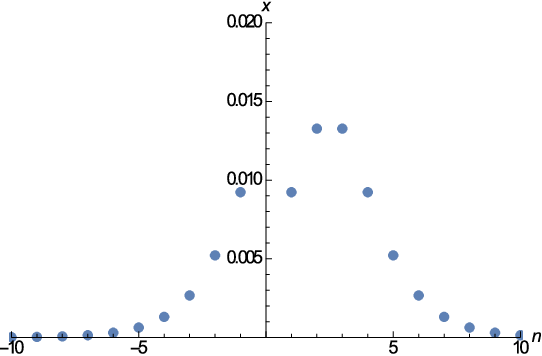}
\end{center}
\caption{Left panel: the projection, on the $x-y$ plane of a pair of symmetric homoclinic orbits for mapping (\ref{4dmap}). Their calculation is presented in the example below. Right panel: the projection of a homoclinic orbit on the $x$--axis: it corresponds to a steady state soliton of equation (\ref{mainpdeeq}).}
\end{figure}

Next, we wish to verify that the homoclinic points located here are transverse.
\subsection{Transversality of homoclinic points}

Let as consider a point $p\in W^u(O)\cap W^s(O)$. The intersection of $W^u(O),\ W^s(O)$ is transversal at $p$, if $T_pW^u(O)\oplus T_pW^s(O)\simeq \mathbb{R}^4$. Since the invariant manifolds of the origin can be represented in parametric form $S^u,\ S^s$, as we saw above, the transversality condition can be expressed as:
\begin{equation}
\label{determinant}
\det{ 
\
\begin{bmatrix}
\frac{\partial }{\partial u_1}S^u(u_1,v_1) \\
\frac{\partial }{\partial v_1}S^u(u_1,v_1) \\
\frac{\partial }{\partial u_2}S^s(u_2,v_2) \\
\frac{\partial }{\partial v_2}S^s(u_2,v_2) 
\end{bmatrix}
}
\neq 0,
\end{equation}
where the determinant is evaluated at $(u_1,v_1,u_2,v_2)$ corresponding to the point $p$, i.e. $S^u(u_1,v_1)=S^s(u_2,v_2)=p$.

Here, we use the expansions $P^s,\ P^u$ obtained above, and compute, for $A\in [-0.145,-0.115]$ and $\epsilon \in (0,1]$, the determinant appearing in the left--hand side of equation (\ref{determinant}) at the approximate solutions calculated in the previous section. We found that parameter $A$ has a strong influence on the value of the determinant, while the influence of parameter $\epsilon$ is indeed negligible.

In figure (3) we present the case where $\epsilon=0.0002$. The value of the determinant is found to be non--zero, implying the transverse intersection of the invariant manifolds of the origin, for every value of $A\in [-0.145,-0.115]$.     

\begin{figure}
\begin{center}
\includegraphics[scale=0.5]{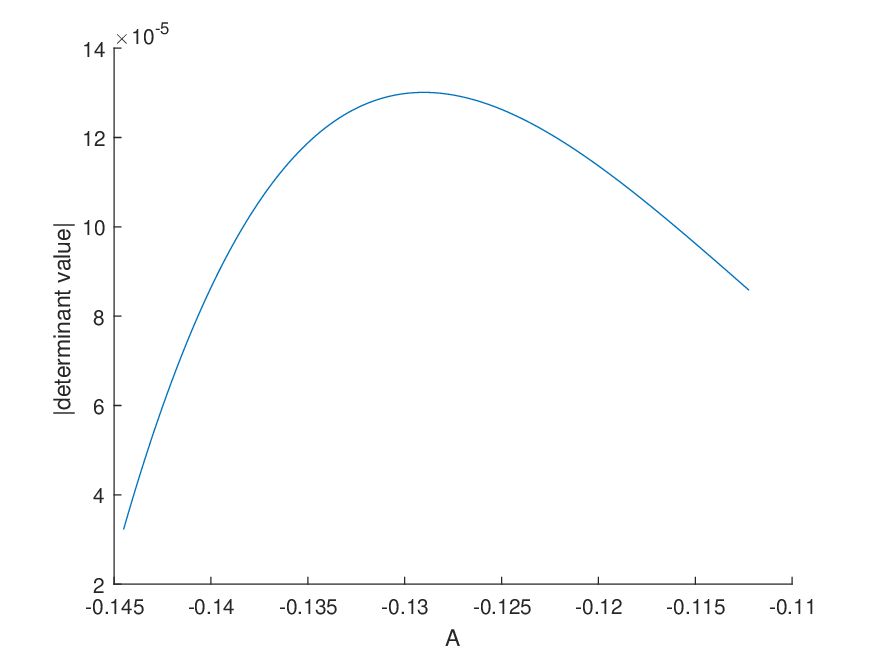}
\end{center}
\caption{The value of the determinant (\ref{determinant}), as a function of parameter $A$, for $\epsilon=0.0002$. It is non--zero, for every value of $A$, implying the transverse intersection of the invariant manifolds of the origin.}
\end{figure}

To further study the effect parameter $A$ has on the transversality of the intersection points, we computed the function which best fits the graph in figure (3). This function was found to be:
\[
T(A)=80.64A^4+33.74A^3+5.272A^2+0.3682A+0.009737.
\] 
This function vanishes at points $A=-0.146292$ and $A=-0.0891431$, implying that homoclinic points exist even when $A$ does not belong in the interval $[-0.145,-0.115]$ found above. This is probable due to the error bound used in our calculations: less strict error bounds would enlarge the existence interval of solutions for equation (\ref{homoclinicseq}).

Indeed, by relaxing condition $\|P^u(u_1,v_1)-P^s(u_2,v_2)\|<10^{-10}$ and allowing even bigger errors, we were able to locate homoclinic points for every $\epsilon >0,\ A\in [\frac{-2+\sqrt{2}}{4},0)$, which we chose not to present here.
\section{Conclusions}  
We studied a discrete NLS equation, with non--nearest neighbour interactions. This equation depends on two parameters, called $\epsilon$ and $A$. The equation, for $A=0,$ gives rise to a volume--preserving diffeomorphism of the plane, the behaviour of which turns out to be quite simple. For $A\neq 0$, however, seeking steady--state solutions of this equation, we get a four--dimensional diffeomorphism, the dynamical behaviour of which we analysed. We found that, for positive values of $\epsilon$ and for $A$ belonging in an interval of the negative semi--axis, this mapping possesses orbits homoclinic to the origin. Each such orbit corresponds to a steady--state discrete soliton of our discrete NLS--equation. Our techniques permit one not only to argue about the existence of these solitons but to also construct them, with great accuracy.


\begin{thebibliography}{9}

\bibitem{AL76} Ablowitz M J and Ladik J F, ``Nonlinear differential-difference equations and Fourier analysis", \textit{J. Math. Phys.}, 17, 1011--1018, 1976.

\bibitem{semi} Wu W P, Schrieffer J R and Heeger A J, Soliton excitations in polyacetylene, \textit{Phys. Rev. B}, 22, 2099--2111, 1980.

\bibitem{S}
Sim\'{o} C, ``Stability of degenerate fixed points of analytic area preserving mappings", \textit{Ast\'{e}risque}, 98-99, 184-194, 1982.  

\bibitem{Cris} Eilbeck J C, Lomsdahl P S and Scott A C, ``The discrete self-trapping equation", \textit{Physica D}, 16, 318--338, 1985.

\bibitem{4th_1} Kuznetsov E A, Rubenchik A M, Zakharov V E, ``Soliton stability in plasmas and hydrodynamics", \textit{Phys. Rep.}, 142, 103, 1986.

\bibitem{4th_2} Rasmussen J J, Rypdal K, ``Blow-up in Nonlinear Schroedinger Equations-I A General Review", \textit{Phys. Scripta}, 33, 481, 1986.

\bibitem{S92} Salerno M, ``Quantum deformations of the discrete nonlinear Schr"{o}dinger equation", \textit{Phys. Rev. A}, 46, 6856--6859, 1992.

\bibitem{Joseph} Ustinov A V, Doderer T, Vernik I V, Pedersen N F, Huebener R P and Oboznov V A, ``Experiments with
solitons in annular Josephson junctions", \textit{Physica D}, 68, 41--44, 1994.

\bibitem{Sergej} Aceves A B, Luther G G, De Angelis C, Rubenchik A M and Turitsyn S K, ``Energy localization in nonlinear fiber arrays: collapse-effect compressor", \textit{Phys. Rev. Lett.}, 75, 73--76, 1995.

\bibitem{Aubry} Aubry S, ``Breathers in nonlinear lattices: existence, linear stability and quantization", \textit{Physica D}, 103, 201--250, 1997.

\bibitem{4th_3} Karpman V I, Shagalov A G, ``Solitons and their stability in high dispersive systems. I. Fourth-order nonlinear Schrödinger-type equations with power-law nonlinearities", \textit{Phys. Lett. A}, 228, (59), 1997.

\bibitem{LM}
Lomel\'{i} H E and Meiss J D, ``Quadratic volume preserving maps'', \textit{Nonlinearity}, 11(3), 557-574, 1998.

\bibitem{coupled} Eisenberg H, Silberberg Y, Morandotti R, Boyd A and Aitchison J, ``Discrete spatial optical solitons in
waveguide arrays", \textit{Phys. Rev. Lett.}, 81, 3383--3386, 1998.

\bibitem{Lenz} Lenz G, Talanina I and Martijn de Sterke, ``Bloch oscillations in an array of curved optical waveguides", \textit{Phys. Rev. Lett.}, 83, 963--966, 1999.

\bibitem{SDM}
Sterling D, Dullin H R and Meiss J D, ``Homoclinic bifurcations for the {H}\'enon map'', \textit{Physica~D}, 134, 153--184, 1999.

\bibitem{FK99} Flach S and Kladko K, ``Moving discrete breathers?", \textit{Physica D}, 127, 61--72, 1999.

\bibitem{DM}
Dullin H R, Meiss J D, ``Generalized H\'{e}non maps: the cubic diffeomorphisms of the plane", \textit{Physica D}, 143 (1-4), 262-289, 2000. 

\bibitem{KRB01} Kevrekidis P G, Rasmussen K O, and
Bishop A R, ``The discrete nonlinear Schr\"{o}dinger equation: a survey of recent results", \textit{Int. J. Mod. Phys. B}, 15, 2833--2900, 2001.

\bibitem{ST}
Shilnikov LP, Shilnikov A, Turaev D, Chua L, ``Methods of Qualtitative Theory in Nonlinear Dynamics", \textit{World Scientific}, 2001.

\bibitem{4th_4} Fibich G, Ilan B, Papanicolaou G, ``Self-focusing with fourth order dispersion", \textit{SIAM J. Appl. Math.}, 62, 4, 1437-1462, 2002.

\bibitem{AMB02} Ablowitz M J, Musslimani Z H and Biondini G, ``Methods For Discrete Solitons in Nonlinear Lattices", \textit{Phys. Rev. E}, 65, 026602, 2002.

\bibitem{ACR03} Aigner A A, Champneys A R and Rothos V M, ``A new barrier to the existence of moving kinks in
Frenkel--Kontorova lattices", \textit{Physica D}, 186, 148--170, 2003.

\bibitem{CLS03} Christodoulides D N, Lederer F and Silderberg Y, ``Discretizing light behaviour in linear and nonlinear waveguide lattices", \textit{Nature}, 424, 817--823, 2003.

\bibitem{FCSEC03} Fleischer J W, Carmon T, Segev M, Efremidis N K and Christodoulides D N, ``Observation of Discrete Solitons in Optically Induced Real Time Waveguide Arrays", \textit{Phys. Rev. Lett.}, 90, 2003.

\bibitem{LM1}
Li M C, Malkin M, ``Bounded nonwondering sets for polynomial mappings", \textit{J.Dyn.Control Systems}, 10(3), 377-389, 2004.

\bibitem{GOST}
Gonchenko S V, Ovsyannikov I I, Sim\'{o} C, Turaev D, ``Three dimensional H\'{e}non--like maps and wild Lorenz--like attractors", \textit{Int.J.Bif.Chaos}, 15, 3493–3508, 2005. 

\bibitem{Jo2} Albiez M, Gati R, F\"{o}lling J, Hunsmann S, Cristiani M, and Oberthaler M K, ``Direct Observation of Tunneling and Nonlinear Self-Trapping in a Single Bosonic Josephson Junction", \textit{Phys. Rev. Lett.} 95, 010402, 2005.

\bibitem{B}
Basu S, Pollak R and Roy M-F, ``Algorithms in Real Algebraic Geometry", \textit{Springer}, 2006.

\bibitem{GMO}
Gonchenko S V, Meiss J D and Ovsannikov I I, ``Chaotic Dynamics of three--dimensional H\'{e}non maps that originate from a homoclinic bifurcation'', \textit{Regul.Chaotic Dyn.}, 11(2), 191-212, 2006.

\bibitem{GLM}
Gonchenko S, Li MC and Malkin M, ``Generalized H\'{e}non maps and Smale horseshoes of new types", \textit{Int.J.Bif.Chaos}, 18(10), 3029-3052, 2008.  

\bibitem{Z}
Zhang X, ``Hyperbolic invariant sets of the real generalized H\'{e}non maps", \textit{Chaos Solitons and Fractals}, 43, 31-41, 2010.

\bibitem{M6}
Mireles--James J D, Lomel H, ``Computation of heteroclinic arcs with application to the volume preserving H\'{e}non family", \textit{SIAM J.App.Dyn.Sys.}, 9(3), 919–953, 2010.

\bibitem{M4}
Mireles--James J D, ``Quadratic volume--preserving maps: (un)stable manifolds, hyperbolic dynamics, and vortex--bubble bifurcations", \textit{J.Nonlinear Sci.}, 23, 585–615, 2013. 

\bibitem{M5}
Mireles--James J D, Mischaikow K, ``Rigorous a posteriori computation of (un)stable manifolds and connecting orbits for analytic maps", \textit{SIAM J.Apl.Dyn.Sys.}, 12(2), 957–1006, 2013.

\bibitem{HCF}
Haro \`{A},  Canadell M, Figueras J, Luque A, Mondelo J, ``The Parameterization Method for Invariant Manifolds: From Rigorous Results to Effective Computations", \textit{Springer International Publishing}, 2016.

\bibitem{GG}
Gonchenko A S, Gonchenko S V, ``Variety of strange pseudohyperbolic attractors in three-dimensional generalized H\'{e}non maps", \textit{Phys. D}, 337, 43–57, 2016.

\bibitem{ABB1}
Anastassiou S, Bountis T, B\"{a}cker A, ``Homoclinic Points of 2-D and 4-D Maps via the Parametrization Method", \textit{Nonlinearity}, 30, 3799-3820, 2017.

\bibitem{M2}
Gonzalez J L, Mireles--James J D, ``High--order parameterization of stable/unstable manifolds for long periodic orbits of maps", \textit{SIAM J. Appl.Dyn.Sys.}, 16(3), 1748–1795, 2017.

\bibitem{M3}
Capi\'{n}ski M J, Mireles--James J D, ``Validated computation of heteroclinic sets", \textit{SIAM J.App.Dyn.Sys.}, 16(1), 375–409, 2017.

\bibitem{ABB2}
Anastassiou S, Bountis T, B\"{a}cker A, ``Recent results on the dynamics of higher dimensional H\'{e}non maps", \textit{Reg.Chaot.Dynamics, 23(2)}, 161-177, 2018.

\bibitem{M1}
Adams R, Mireles--James J D, ``Validated numerics for continuation and bifurcation of connecting orbits of maps", \textit{Qual.Th.Dyn.Syst.}, 18, 107–137, 2019.

\bibitem{A}
Anastassiou S, ``Complicated behaviour in cubic H\'{e}non maps", \textit{Theor.Math.Physics}, 207(2), 572-578, 2021. 


\end{thebibliography}
\end{document}